\algrenewcommand\algorithmicrequire{\textbf{Precondition:}} 
\algrenewcommand\algorithmicensure{\textbf{Postcondition:}}
\algrenewcommand\alglinenumber[1]{\footnotesize #1} 
\algnewcommand{\IIf}[1]{\State\algorithmicif\ #1\ \algorithmicthen} 
\algnewcommand{\EndIIf}{\unskip\ \algorithmicend\ \algorithmicif}
\renewcommand{\quad}{$~~~\;\;\;$}
\newtheorem{theorem}{Theorem}
\newtheorem{proposition}{Proposition}
\newtheorem{lemma}{Lemma}
\newtheorem{corollary}{Corollary}
\newcommand{\transp}{{^{\rm T}}}
\renewcommand{\R}{\mathbb{R}}
\newcommand{\matr}[1]{\begin{bmatrix} #1 \end{bmatrix}}    
\def\transp{^{\rm T}}
\newcommand{\Oh}{\mathcal O}
\newcommand{\J}{\mathcal{J}}
\newcommand{\ip}[2]{\left\langle #1 , #2 \right\rangle}    
\def\int{\mathrm{int}}
\providecommand{\newoperator}[3]{%
\newcommand*{#1}{\mathop{#2}#3}}
\newoperator{\argmax}{\mathsf{argmax}}{}
\newoperator{\argmin}{\mathsf{argmin}}{}
\author{Javier Pe\~na\thanks{Tepper School of Business,
Carnegie Mellon University, USA, {\tt jfp@andrew.cmu.edu}}
\and  Negar Soheili\thanks{College of Business Administration,  University of Illinois at Chicago, USA, {\tt nazad@uic.edu }}
}
\title{Projection and rescaling algorithm for finding maximum support solutions to polyhedral conic systems}
\begin{document}
\maketitle

\abstract{
We propose a simple {\em projection and rescaling algorithm} that  finds 
 {\em maximum support} solutions to the pair of feasibility problems
\[
\text{find}  \; x\in L\cap\R^n_{+} \;\;\;\; \text{ and } \; \;\;\;\;
\text{find} \; \hat x\in L^\perp\cap\R^n_{+},
\]
where $L$ is a linear subspace in $\R^n$ and $L^\perp$ is its orthogonal complement. 
The algorithm complements a {\em basic procedure} that involves only projections onto $L$ and $L^\perp$ with a periodic {\em rescaling step.}  The number of rescaling steps and thus overall computational work performed by the algorithm are bounded above in terms of a condition measure of the above pair of problems.

Our algorithm is a natural but significant extension of a previous projection and rescaling algorithm that finds a solution to the full support problem
\[
\text{find}  \; x\in L\cap\R^n_{++}
\]
when this problem is feasible.  As a byproduct of our new developments, we obtain a sharper analysis of the projection and rescaling algorithm in the latter special case.
}

\section{Introduction.}
We propose a simple {\em projection and rescaling algorithm} that  finds 
 {\em maximum support} solutions to the pair of feasibility problems
\[
\text{find}  \; x\in L\cap\R^n_{+} \;\;\;\; \text{ and } \; \;\;\;\;
\text{find} \; \hat x\in L^\perp\cap\R^n_{+},
\]
where $L$ is a linear subspace in $\R^n$ and $L^\perp$ is its orthogonal complement. 
This maximum support problem is central in optimization as it subsumes any polyhedral feasibility problem and any linear programming
problem via standard homogenization procedures.

The projection and rescaling algorithm~\cite{PenaS16, PenaS19} is a recently developed  method to solve the feasibility problem 
\begin{equation}\label{primal}
\text{find}  \; x\in L\cap\R^n_{++},
\end{equation}
where $L\subseteq \R^n$ is a linear subspace.  
The gist of this algorithm is to combine two main steps, namely a {\em basic procedure}  and a {\em rescaling step}.  The basic procedure either finds a solution to~\eqref{primal} if this problem is {\em well-conditioned}, or determines a rescaling step that improves the {\em conditioning} of problem~\eqref{primal}.  In the latter case  a rescaling step is performed and the basic procedure is invoked again.  If $L\cap\R^n_{++} \ne \emptyset$ then this kind of iterative basic-procedure and rescaling-step scheme succeeds in finding a solution to~\eqref{primal} because the rescaling step eventually yields a sufficiently well-conditioned problem that the basic procedure can solve~\cite[Theorem 1]{PenaS16}.  The conditioning of  problem~\eqref{primal} is determined by a suitable measure of the {\em most interior} points in $L\cap \R^n_{++}$.

The projection and rescaling algorithm~\cite{PenaS16} was largely inspired by Chubanov's work~\cite{Chub12, Chub15} who developed this algorithm when $L$ is of the form $L = \{x\in\R^n: Ax  = 0\}$ for a matrix $A\in\R^{m\times n}$.  The projection and rescaling algorithm is in the same spirit as a number of articles based on the principle of enhancing a simple procedure with some sort of periodic reconditioning step~\cite{BellFV09, Betk04, DaduVZ16, DaduVZ17, DunaV06, Freund85, HobeR17, KitaT18, LourKMT16, PenaS13, PenaS19, Roos18}.

The projection and rescaling algorithm described in~\cite[Algorithm 1]{PenaS16} successfully terminates only when $L\cap \R^n_{++} \ne \emptyset$.  As it is described in~\cite[Algorithm 2]{PenaS16}, the algorithm has a straightforward extension that  terminates with a solution to \eqref{primal} or to its strict alternative problem
\begin{equation}\label{dual}
\text{find}  \; \hat x\in L^\perp\cap\R^n_{++},
\end{equation}
where $L^\perp := \{s: \ip{s}{x} = 0 \text{ for all } x\in L\}$,
provided that~\eqref{primal} or~\eqref{dual} is feasible.   In other words, the projection and rescaling algorithm~\cite[Algorithm 2]{PenaS16} solves the {\em full support problem} in the terminology of Dadush, Vegh, and Zambelli~\cite{DaduVZ17}.

The article~\cite{PenaS19} demonstrates the computational effectiveness of the projection and rescaling algorithm~\cite[Algorithm 2]{PenaS16}. For computational purposes, the implementation in~\cite{PenaS19} is a tweaked version of~\cite[Algorithm 2]{PenaS16} that applies to the two alternative problems
\begin{equation}\label{primal-dual}
\text{find}  \; x\in L\cap\R^n_{+}, \;\;\;\; \text{and } \; \;\;\;\;\text{find} \; \hat x\in L^\perp\cap\R^n_{+}
\end{equation}
without any prior full-support assumptions. 
The implementation in~\cite{PenaS19} aims to find {\em maximum support solutions} to the problems in~\eqref{primal-dual}
in the terminology of Dadush, Vegh, and Zambelli~\cite{DaduVZ17}. More precisely~\cite[Algorithm 1]{PenaS19} aims to find points $x\in L\cap\R^n_{+}$ and $\hat x \in L^\perp\cap\R^n_{+}$ such that the support sets
$\{j\in \{1,\dots,n\}: x_j > 0\}$ and $\{j\in \{1,\dots,n\}: \hat x_j > 0\}$
are maximal.  Notice that $x\in L\cap\R^n_{+}$ and $\hat x \in L^\perp\cap\R^n_{+}$ have maximum support if and only if  $x\in \text{relint}(L\cap\R^n_{+})$ and $\hat x \in \text{relint}(L^\perp\cap\R^n_{+})$ respectively.

The computational results in~\cite{PenaS19} demonstrate that~\cite[Algorithm 1]{PenaS19} indeed succeeds in solving the maximum support problem. 
More precisely,~\cite[Algorithm 1]{PenaS19} provides a full description of a publicly available MATLAB implementation and reports on extensive computational experiments that provide ample empirical evidence of the effectiveness of the projection and rescaling algorithm to solve the maximum support problem.
However, the correctness of~\cite[Algorithm 1]{PenaS19} was formally shown only in the special case when either~\eqref{primal} or~\eqref{dual} was feasible.  The main goal of this article is to give a formal proof of the correctness of a variant of~\cite[Algorithm 1]{PenaS19}  that finds maximum support solutions to the pair of problems in~\eqref{primal-dual} in full generality.  Our work is related to the maximum support algorithms described in~\cite[Section 4]{DaduVZ17}.  However there are several major differences.

First and foremost, the type of rescaling step in our algorithms is fundamentally different from that in~\cite{DaduVZ17}.
The approach in \cite{DaduVZ17} assumes that $L = \{x\in\R^n: Ax = 0\}$ and thus  $L^\perp = \{A\transp y: y\in \R^m\}$ for some $A\in \R^{m\times n}$.  The algorithms in \cite{DaduVZ17} perform a rescaling step of the form $A \mapsto MA$ for some non-singular $M\in \R^{m\times m}$.  In other words, the rescaling step reshapes the set $\{y\in\R^m:A\transp y \ge 0\} \subseteq \R^m$  while leaving $L$ and $L^\perp$ unchanged.  By contrast, our approach perform a rescaling of the form $L \mapsto DL$ for some diagonal $D\in\R^{n\times n}$ with positive diagonal entries.  In other words, our rescaling step reshapes $L\cap\R^n_+ \subseteq\R^n$ while leaving $\R^n_+$ unchanged.  
Second, our algorithm treats both problems $x\in L\cap \R^n_+$ and $\hat x\in L^\perp \cap \R^n_+$ jointly and in completely symmetric fashion.  Indeed, a key ingredient of our algorithm is the duality between these two problems.   Third, our algorithm is a natural extension of the projection and rescaling algorithm in~\cite{PenaS16} and inherits most of its simplicity.  Fourth, our results are stated entirely in the real model of computation.  Unlike~\cite{DaduVZ17}, we do not require the subspace $L$ to be of the form $\{x\in \R^n: Ax =0\}$ for some $A\in \mathbb{Z}^{m\times n}$.  Since our results apply to real data, they have no dependence at all on any bit-length encoding of the subspace $L$.  Instead, we show that the running time of our algorithm depends on suitable {\em condition measures} $\sigma(L)$ and $\sigma(L^\perp)$ of the relative interiors of  $L\cap\R^n_+$ and $L^\perp\cap\R^n_+$ respectively.  The condition measures $\sigma(L)$ and $\sigma(L^\perp)$ can be seen as an extension and refinement of the condition measure $\delta_\infty(L)$ for $L\cap\R^n_{++}$ proposed in~\cite{PenaS16}.  Fifth, although the analysis of our algorithm depends on the  condition measures $\sigma(L)$ and $\sigma(L^\perp)$, the algorithm does not require knowledge of them.  In short, our work  provides answers to the main open questions stated by Dadush, Vegh, and Zambelli~\cite[Section 5]{DaduVZ17}.

Our approach also yields, as a nice byproduct, a sharper analysis of the original projection and rescaling algorithm~\cite[Algorithm 1]{PenaS16} for the full-support case, that is, when $L\cap \R^n_{++}\ne \emptyset$ or $L^\perp\cap \R^n_{++}\ne \emptyset$. Furthermore, for the full-support case we compare the performance of our projection and rescaling algorithm and its condition-based analysis with the performance of the previous rescaling algorithms and their condition-based analyses described in~\cite{BellFV09,DaduVZ17,DunaV06,PenaS13}. 
The performance of the latter algorithms is stated in terms of a different condition measure $\rho(A)$ for the problems~\eqref{primal} and~\eqref{dual} that depends on a matrix $A\in \R^{m\times n}$ such that
$L = \{x\in \R^n: Ax=0\}$ or equivalently $L^\perp = \{A\transp y: y\in \R^m\}$.  
Thus our comparison concentrates on how the condition measures 
$\sigma(L)$ and $\sigma(L^\perp)$ used in this paper and the condition measure $\rho(A)$ used in~\cite{BellFV09,DaduVZ17,DunaV06,PenaS13} relate to each other.  
It is remarkable that these condition measures $\rho(A)$ and $\sigma(L),\sigma(L^\perp)$ can be related at all as they are associated to  geometric properties of objects in completely different spaces.  When $L^\perp\cap \R^n_{++}\ne \emptyset$ the quantity $\sigma(L^\perp)$ can be interpreted as a measure of how deep $L^\perp$ cuts inside $\R^n_+$ whereas $\rho(A)$ can be interpreted as the width of the cone $\{y\in\R^m: A\transp y \ge 0\}$.  Similar interpretations apply when $L\cap \R^n_{++}\ne \emptyset$.  Without any preconditioning assumptions, it is easy to construct counterexamples where either of the condition measures $\rho(A)$ or $\sigma(L^\perp)$ is arbitrarily larger than the other.  However, we show that after performing a natural and simple preconditioning of $A$, namely after normalizing the columns of $A$, the measures $\sigma(L)$ and $\sigma(L^\perp)$  are less conservative, and possibly far less so, than $\rho(A)$.  Consequently, our projection and rescaling algorithm applied to the full-support case has stronger condition-based convergence properties than the previous ones in~\cite{BellFV09,DaduVZ17,DunaV06,PenaS13}.

\medskip

The remaining sections of the paper are organized as follows. Section~\ref{sec.support} presents our main developments.  This section details our approach to finding  maximum support solutions to~\eqref{primal-dual}.  The approach hinges on three key ideas.  First, we propose an algorithm that finds a point in a set of the form $L\cap \R^n_+$ that may not necessarily have  maximum support (see Algorithm~\ref{algo.support}).  Second, by relying on the first algorithm and on a natural duality between the two problems in~\eqref{primal-dual}, we propose a second algorithm that finds maximum support solutions to~\eqref{primal-dual} (see Algorithm~\ref{algo.max.support}).  Third, the analyses and to some extent the design of our algorithms rely on suitable refinements of condition measures previously proposed and used in~\cite{PenaS16,Ye94} (see Proposition~\ref{prop:support} and Theorem~\ref{theor}).  Section~\ref{sec.full.support} shows how the new developments in Section~\ref{sec.support} automatically yield a sharpening of the analysis previously performed in~\cite{PenaS16} for the projection and rescaling algorithm in the full support case.  Section~\ref{sec.full.support} also shows that the condition measure $\rho(A)$ used in 
~\cite{BellFV09,DaduVZ17,DunaV06,PenaS13} for other rescaling algorithms
is more conservative, and possibly far more so, than $\sigma(L)$ and $\sigma(L^\perp)$ provided the columns of $A$ are normalized.
  Section~\ref{sec.bp} details an implementation of the basic procedure, which is a central building block of our projection and rescaling algorithm.  We limit our exposition to the most efficient known implementation of the basic procedure, namely a smooth perceptron scheme~\cite{PenaS16,PenaS19}.  The exposition in Section~\ref{sec.bp} highlights a simple and insightful but somewhat overlooked duality property that underlies the first-order implementations of the basic procedure  described in~\cite{PenaS16,PenaS19}.   Section~\ref{sec.variant} describes a variant of Algorithm~\ref{algo.support} that performs rescaling along multiple directions and Section~\ref{sec.conclusion} concludes the paper.

\section{Partial support and maximum support solutions.}
\label{sec.support}

We develop a two-step approach to finding maximum support solutions to~\eqref{primal-dual}.  The first step is Algorithm~\ref{algo.support} which  finds a point $x\in L \cap \R^n_+$ whose support may or may not be maximum.  By leveraging Algorithm~\ref{algo.support} and the duality between the two problems in~\eqref{primal-dual}, Algorithm~\ref{algo.max.support} finds maximum support solutions $x\in L \cap \R^n_+$ and 
$\hat x \in L^\perp \cap \R^n_+$. 
Our developments rely on several key constructions and pieces of notation detailed next.

For $x\in \R^n_+$, the {\em support} of $x$ is the set $\{j\in \{1,\dots,n\}: x_j > 0\}$.  Suppose $L\subseteq \R^n$ is a linear subspace. 
 Let $J(L)$ be the following {\em maximum support} index set \[
J(L) := \left\{j\in \{1,\dots,n\}: x_j > 0 \text{ for some } x\in L \cap\R^n_+\right\}.
\] 
In other words, $J(L)$ is the support of any point in the relative interior of $L\cap \R^n_{+}$ or equivalently the maximum support of all $x\in L\cap \R^n_+$.
It is evident that $J(L) = \{1,\dots,n\} \Leftrightarrow L\cap \R^n_{++} \ne \emptyset$ and $J(L) = \emptyset
\Leftrightarrow L\cap \R^n_{+} = \{0\}
 \Leftrightarrow L^\perp\cap \R^n_{++} \ne \emptyset$. 
As we formalize in the sequel, the difficulty of correctly identifying $J(L)$ and thus that of finding a maximum support solution to $x\in L\cap \R^n_+$ is determined by the following condition measure which is a variant of a condition measure proposed by Ye~\cite{Ye94}:
\[
\sigma(L) := \min_{j\in J(L)} \max \{x_j: x\in L \cap \R^n_+, \|x\|_\infty \le 1\}.
\]
The construction of $\sigma(L)$ implies  that $\sigma(L) \in (0,1]$ whenever $J(L) \ne \emptyset$.  For convenience we let $\sigma(L):= 1$ when $J(L) = \emptyset.$ This convention ensures that $\min\{\sigma(L),\sigma(L^\perp)\} = \sigma(L^\perp)$ when $J(L)=\emptyset.$

It is easy to see, via a standard separation argument or theorem of the alternative, that the maximum support index sets $J(L)$ and $J(L^\perp)$ partition $\{1,\dots,n\}$, that is, 
\[
J(L)\cap J(L^\perp) = \emptyset \text{ and } 
J(L)\cup J(L^\perp) = \{1,\dots,n\}.
\]
For $\sigma \in (0,1]$ let
\[
J_\sigma(L) = \{j\in\{1,\dots,n\}: x_j \ge \sigma \text{ for some } x\in L\cap \R^n_+, \|x\|_\infty \le 1\}. 
\] 
From the construction of $\sigma(\cdot), J(\cdot),$ and $J_\sigma(\cdot)$ it follows that for $\sigma \in (0,1]$
\[
J_\sigma(L) \subseteq J(L) \text{ and } J_\sigma(L) = J(L) \text{ if and only if } \sigma \le \sigma(L).
\]
For $J\subseteq \{1,\dots,n\}$ let $\R^J \subseteq \R^n$ denote the subspace
$
\{x\in \R^n: x_i= 0 \text{ for } i\not \in J\} 
$.  For nonempty $J\subseteq \{1,\dots,n\}$ let $\Delta(J)\subseteq \R^J$ denote the set $\{x\in \R^J: x \ge 0, \|x\|_1 = 1\}.$   Given a linear subspace $L\subseteq \R^n$ and $J\subseteq \{1,\dots,n\}$, let $P_{L\vert J}:\R^{J}\rightarrow L\cap \R^{J}$ denote the orthogonal projection from $\R^{J}$ onto $L\cap \R^{J}$.

Similar to the projection and rescaling algorithm in~\cite{PenaS16,PenaS19}, Algorithm~\ref{algo.support} consists primarily of two main steps, namely a {\em basic procedure} and a {\em rescaling phase.} 
Both of these steps are slight modifications of those in the original projection and rescaling algorithm~\cite[Algorithm 1]{PenaS19}. Our algorithm attempts to find $J(L)$ by gradually identifying and trimming indices presumed not to be in $J(L)$.  The trimming decision is based on whether the rescaling matrix has exceeded a certain predefined threshold determined by an educated guess $\sigma$ of $\sigma(L)$.  If the educated guess $\sigma$ is too large, the algorithm may mistakingly trim indices in $J(L)$.  Thus Algorithm~\ref{algo.support} is only guaranteed to find a solution $x\in L\cap \R^n_+$ with {\em partial support,} that is, a solution whose support may not be maximum. When $\sigma \leq \sigma(L)$, Algorithm~\ref{algo.support}  correctly identifies the maximum support set $J(L)$ (see  Proposition~\ref{prop:support}).  Although it is evidently desirable to choose the educated guess $\sigma$ deliberately small, this comes at a cost as Proposition~\ref{prop:support} shows.

Algorithm~\ref{algo.support} works as follows.  Start with the support set $J = \{1,\ldots,n\}$, rescaling matrix $D = I$, and an educated guess $\sigma > 0$ of $\sigma(L)$.  At this initial stage no indices are trimmed and the problem is not rescaled.  At each main iteration the basic procedure either finds $u \in \Delta(J)$ such that 
$(P_{DL\vert J}u)_J > 0$  or finds $z \in \Delta(J)$ such  $\|(P_{DL \vert J}z)^+\|_1 \le \frac{1}{2} \|z\|_\infty$.  When $(P_{DL\vert J}u)_J > 0$ the algorithm outputs the point $x:=D^{-1}P_{DL \vert J} u \in L\cap \R^n_+$ with support $J$.  When $\|(P_{DL \vert J}z)^+\|_1 \le \frac{1}{2} \|z\|_\infty$ the algorithm updates the rescaling matrix $D$ to improve the conditioning of $DL\cap \R^n_+$ and trims $J$ if some entries in $D$ exceed the threshold $1/\sigma$.  The main difference between Algorithm~\ref{algo.support} and the projection and rescaling algorithm in~\cite{PenaS16} is the support set $J$ and its dynamic adjustment after each rescaling step.

Section~\ref{sec.bp} below describes a possible first-order implementation of the basic procedure that terminates in at most $\Oh(n^{1.5})$ low-cost iterations.  Other first-order implementations are discussed in~\cite{PenaS16,PenaS19} all of which terminate in at most $\Oh(n^3)$ low-cost iterations.

The following technical lemma formalizes how the rescaling step improves the conditioning of $L\cap \R^n_+$. To that end, we will rely on one additional piece of notation.  Suppose $L\subseteq \R^n$ is a linear subspace and $i\in \{1,\dots,n\}$.  Let
\[
\sigma_i(L):=\max\{x_i: x\in L\cap \R^n_+, \|x\|_\infty \le 1\}.
\]
Observe that $J(L)=\{i: \sigma_i(L)>0\}$ and $\sigma(L) = \min_{i\in J(L)} \sigma_i(L).$  
Thus the vector $\matr{\sigma_1(L) & \cdots & \sigma_n(L)}$  encodes both $J(L)$ and  $\sigma(L)$ but is more informative about the conditioning of $L\cap\R^n_+$ than $J(L)$ and $\sigma(L)$.

 We will use the following common notational convention: for $i\in \{1,\dots,n\}$ let $e_i\in \R^n$ denote the vector whose $i$-th entry is one and all other entries are zero.

\begin{lemma}\label{lemma} Let $L\subseteq \R^n$ be a linear subspace and $P:\R^n\rightarrow L$ be the orthogonal projection onto $L$.  Suppose $z\in\R^n_+ \setminus\{0\}$ is such that $\|(Pz)^+\|_1\le \frac{1}{2} \|z\|_\infty = \frac{1}{2}z_i$ for some $i \in \{1,\dots,n\}$. Then 
\begin{equation}\label{eq.cut}
x\in L \cap \R^n_+ \Rightarrow x_i \le \frac{\|x\|_\infty}{2}.
\end{equation}
In particular, for $D = I+e_ie_i\transp$ the rescaled subspace $DL\subseteq \R^n$ satisfies
\[
\sigma_i(DL) = 2\sigma_i(L) \text{ and } \sigma_j(DL) = \sigma_j(L) \text{ for } j\ne i.
\]
\end{lemma}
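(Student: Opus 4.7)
The plan is to establish the inequality \eqref{eq.cut} by a standard duality argument, and then read off the scaling relations for $\sigma_i(DL)$ and $\sigma_j(DL)$ as an immediate consequence. The key observation is that orthogonal projection combined with nonnegativity yields a clean one-sided bound on $\langle x, z\rangle$.

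First I would prove \eqref{eq.cut}. Fix $x\in L\cap\R^n_+$. Since $x\in L$ and $z - Pz \in L^\perp$, we have
\[
\ip{x}{z} = \ip{x}{Pz}.
\]
On the left-hand side, nonnegativity of $x$ and $z$ together with the hypothesis $z_i=\|z\|_\infty$ yields $\ip{x}{z} \ge x_i z_i$. On the right-hand side, nonnegativity of $x$ gives
\[
\ip{x}{Pz} \le \ip{x}{(Pz)^+} \le \|x\|_\infty \cdot \|(Pz)^+\|_1 \le \tfrac{1}{2}\|x\|_\infty \cdot z_i,
\]
where the last inequality is the hypothesis. Combining and dividing by $z_i>0$ (which is positive because $z\ne 0$ and $z_i=\|z\|_\infty$) gives $x_i \le \tfrac{1}{2}\|x\|_\infty$.

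Next, for the scaling statement, note that $D=I+e_ie_i\transp$ is diagonal with positive entries, so $x\mapsto Dx$ is a bijection between $L\cap\R^n_+$ and $DL\cap\R^n_+$. I would then show that for every $x\in L\cap \R^n_+$ we have $\|Dx\|_\infty = \|x\|_\infty$. Indeed $(Dx)_j = x_j$ for $j\ne i$ and $(Dx)_i = 2x_i \le \|x\|_\infty$ by \eqref{eq.cut}, so
\[
\|Dx\|_\infty = \max\bigl(2x_i,\; \max_{j\ne i} x_j\bigr) = \max_{j\ne i} x_j \le \|x\|_\infty,
\]
and the reverse inequality $\|Dx\|_\infty \ge \|x\|_\infty$ is obvious because $D\ge I$ componentwise. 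Using the substitution $y=Dx$ in the definitions then gives
\[
\sigma_i(DL) = \max\{2x_i : x\in L\cap\R^n_+,\ \|x\|_\infty\le 1\} = 2\sigma_i(L),
\]
and likewise $\sigma_j(DL)=\sigma_j(L)$ for $j\ne i$.

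The only step that requires genuine care is the chain of inequalities in the first paragraph: one must justify why $\ip{x}{z}=\ip{x}{Pz}$ (self-adjointness of $P$ together with $x\in L$), and then why the nonnegativity of $x$ lets us replace $Pz$ by its positive part $(Pz)^+$ without loss. Once \eqref{eq.cut} is in hand, the identity $\|Dx\|_\infty = \|x\|_\infty$ on $L\cap\R^n_+$ is the small but essential lemma that makes the $\sigma$-values transform as claimed; everything else is bookkeeping with the definitions.
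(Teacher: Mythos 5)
Your proof is correct and follows essentially the same path as the paper's: the chain $x_i z_i \le \ip{x}{z} = \ip{x}{Pz} \le \ip{x}{(Pz)^+} \le \|x\|_\infty\|(Pz)^+\|_1 \le \tfrac12\|x\|_\infty z_i$ for \eqref{eq.cut}, followed by observing that on $L\cap\R^n_+$ the rescaling by $D$ does not change the $\ell_\infty$-norm (the paper phrases this as the set identity $\{x\in L\cap\R^n_+:\|x\|_\infty\le 1\}=\{x\in L\cap\R^n_+:\|Dx\|_\infty\le 1\}$; you prove the slightly stronger pointwise identity $\|Dx\|_\infty=\|x\|_\infty$) and then substituting in the definition of $\sigma_j$. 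The only cosmetic difference is that the paper proves set equality where you prove norm equality, which is a harmless strengthening.
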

\begin{proof}
If $x\in L \cap \R^n_+$ then
\[
0 \le x_iz_i \le \ip{x}{z} = \ip{Px}{z} = \ip{x}{Pz} \le \ip{x}{(Pz)^+} \le \|x\|_\infty \cdot \|(Pz)^+\|_1 \le \frac{\|x\|_\infty\cdot z_i}{2}.
\] 
Thus~\eqref{eq.cut} follows.

Next,~\eqref{eq.cut} implies that $
\{x\in L\cap\R^n_+: \|x\|_\infty \le 1\}=
\{x\in L\cap\R^n_+: \|Dx\|_\infty \le 1\}.$  Therefore for $j=1,\dots,n$ 
\begin{align*}
\sigma_j(DL) &= \max\{(Dx)_j: x\in L\cap\R^n_+, \|Dx\|_\infty \le 1\} \\
& = \max\{(Dx)_j: x\in L\cap\R^n_+, \|x\|_\infty \le 1\}\\
& = D_{jj} \max\{x_j: x\in L\cap\R^n_+, \|x\|_\infty \le 1\}\\
&= D_{jj} \sigma_j(L).
\end{align*}
That is, $\sigma_i(DL) = 2\sigma_i(L)$ and $\sigma_j(DL) = \sigma_j(L)$ for $j\ne i$.
\qed
\end{proof}

{\centering\begin{minipage}{\linewidth}
\begin{algorithm}[H]
  \caption{Partial support} \label{algo.support}
  \begin{algorithmic}[1]
    \State  ({\bf Initialization}) 
    \Statex Let $D := I$, $ J := \{1,\dots,n\}$, and $\sigma\in (0,1)$ be an educated  guess of $\sigma(L)$.
       \State Let $P := P_{DL\vert J}$
	\State ({\bf Basic Procedure})
	\Statex 	\quad Find  either 
	$u\in \Delta(J)$ such that $(Pu)_J > 0$ or 
	\Statex \quad $z \in \Delta(J)$ such that $\|(Pz)^+\|_1 \le \frac{1}{2} \|z\|_\infty$.
	\State   {\bf If} $(Pu)_J > 0$ {\bf then} HALT and output  $x = D^{-1}Pu$ and $J$

\State {\bf Else (Rescale $L$ \& Trim $J$) }
\Statex \quad let $i:= \argmax_j z_j$ and $D:=(I+e_ie_i\transp)D$
\Statex \quad {\bf if} $D_{ii} > 1/\sigma$ {\bf then} let $J = J \setminus \{i\}$
\Statex \quad {\bf if } $J = \emptyset$ {\bf then} HALT and output  $x = 0$ and $J=\emptyset$
	\Statex \quad Go back to step 2
	\end{algorithmic}
\end{algorithm}
\end{minipage}}

\begin{proposition}\label{prop:support} Suppose $\sigma \in (0,1)$.  Then Algorithm~\ref{algo.support} finds $x\in L\cap \R^n_+$ such that $x_J > 0$ for some $J_\sigma(L)\subseteq J \subseteq J(L) $ in at most 
\begin{equation}\label{eq.bound.partial}
\sum_{i\in J_{\sigma}(L)} \lceil\log_2(1/\sigma_i(L))\rceil + \left(n-\vert J_\sigma(L)\vert\right) \lceil\log_2(1/\sigma)\rceil \le  n \lceil\log_2(1/\sigma)\rceil\end{equation}
rescaling steps. Furthermore $J = J(L)$ if $\sigma \le \sigma(L)$.
\end{proposition}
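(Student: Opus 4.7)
The plan is to track the evolution of the rescaling matrix $D$ and the support set $J$ via iterated application of Lemma~\ref{lemma}.  Let $M^{(k)} := D^{(k)}L \cap \R^{J^{(k)}}$ denote the ``active'' subspace at iteration $k$.  Notice that $P_{D^{(k)}L \vert J^{(k)}}$ is precisely the orthogonal projection onto $M^{(k)}$, so each certificate $z$ returned by the basic procedure satisfies the hypothesis of Lemma~\ref{lemma} applied to $M^{(k)}$ with $i := \argmax_j z_j$.  Lemma~\ref{lemma} therefore gives $\sigma_i(M^{(k+1)}) = 2\sigma_i(M^{(k)})$ and $\sigma_j(M^{(k+1)}) = \sigma_j(M^{(k)})$ for $j \ne i$ whenever no trimming occurs.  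A short direct computation shows that when the index $i$ just rescaled is also trimmed, $M^{(k+1)} = D^{(k)}L \cap \R^{J^{(k+1)}}$, i.e., the trimming effectively undoes the rescaling within the new ambient $\R^{J^{(k+1)}}$.

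The core of the argument is to establish the invariant $D^{(k)}_{ii}\,\sigma_i(L) \le 1$ for every $i \in J^{(k)}$ at every iteration $k$, by induction on $k$.  The base case $k=0$ is immediate since $D^{(0)} = I$.  For the inductive step, one uses a telescoping argument that couples the rescaling identity from Lemma~\ref{lemma} with the ``undoing'' phenomenon at trimmings and the trivial bound $\sigma_i(M^{(k)}) \le 1$, so that the action of $D^{(k)}_{ii}$ on $\sigma_i(L)$ can be read off directly from the current $\sigma_i(M^{(k)})$.  Granting the invariant, the proposition follows quickly.  For $i \in J_\sigma(L)$ we have $\sigma_i(L) \ge \sigma$, hence $D^{(k)}_{ii} \le 1/\sigma$, so the trimming condition $D^{(k)}_{ii} > 1/\sigma$ never triggers at $i$ and thus $J_\sigma(L) \subseteq J^{(k)}$ for all $k$.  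Writing $D^{(k)}_{ii} = 2^{k_i}$ where $k_i$ counts rescalings at $i$, the invariant yields $k_i \le \lceil \log_2(1/\sigma_i(L)) \rceil$ for $i \in J_\sigma(L)$; for $i \notin J_\sigma(L)$ the trimming threshold forces $k_i \le \lceil \log_2(1/\sigma) \rceil$.  Summing produces the stated rescaling-count bound.

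It remains to handle termination and output structure.  Because the rescaling count is finite, the algorithm halts either via step~4 with $(Pu)_J > 0$, in which case $x = D^{-1}Pu \in L \cap \R^n_+$ satisfies $x_J > 0$ (using that $D^{-1}$ is positive diagonal and $P_{DL \vert J}u \in DL \cap \R^J$ has entries zero outside $J$ and positive on $J$), giving $J \subseteq \mathrm{supp}(x) \subseteq J(L)$; or via step~5 with $J = \emptyset$, in which case $J_\sigma(L) \subseteq \emptyset$ by the invariant and the inclusion is vacuous.  When $\sigma \le \sigma(L)$, the preamble observation $J_\sigma(L) = J(L)$ sandwiches $J$ to exactly $J(L)$.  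The subtlest step in this plan will be verifying the core invariant in the presence of trimmings at indices $j \ne i$: such trimmings shrink the ambient $\R^{J^{(k)}}$ and could in principle decrease $\sigma_i(M^{(k)})$, so the bookkeeping must be done carefully, leveraging the undoing identity $M^{(k+1)} = D^{(k)}L \cap \R^{J^{(k+1)}}$, to show that the telescoping survives these events.
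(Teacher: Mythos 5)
Your strategy is, at its core, the paper's: apply Lemma~\ref{lemma} to argue each rescaling doubles the relevant $\sigma_i$-measure while $\sigma_i$ remains $\le 1$, deduce $D_{ii}\le 1/\sigma_i(L)$, and sum $\log_2(D_{ii})$ over $i$ to get the rescaling-count bound; the remaining claims (output lies in $L\cap\R^n_+$ with $x_J>0$, $J_\sigma(L)\subseteq J\subseteq J(L)$, and $J=J(L)$ when $\sigma\le\sigma(L)$) are handled identically. Your explicit bookkeeping through $M^{(k)}=D^{(k)}L\cap\R^{J^{(k)}}$ and the ``undoing'' identity $D^{(k+1)}L\cap\R^{J^{(k+1)}}=D^{(k)}L\cap\R^{J^{(k+1)}}$ (which is correct, since $D^{(k+1)}$ and $D^{(k)}$ agree on $\R^{J^{(k+1)}}$) is a finer-grained version of the same argument.

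The honest caveat in your last paragraph is, however, a genuine gap rather than a mere technicality to be postponed. After a trimming at index $j\ne i$, the basic procedure operates with $P_{D L\vert J}$ for the shrunken $J$, so the certificate $z$ satisfies the hypothesis of Lemma~\ref{lemma} only for the subspace $DL\cap\R^J$, \emph{not} for $DL$. What you can then deduce is $\sigma_i(DL\cap\R^J)=D_{ii}\,\sigma_i(L\cap\R^J)\le 1$, which yields $D_{ii}\le 1/\sigma_i(L\cap\R^J)$. This is strictly weaker than the invariant you want, $D_{ii}\,\sigma_i(L)\le 1$, because $\sigma_i(L\cap\R^J)$ can be much smaller than $\sigma_i(L)$ (indeed it can be $0$ while $\sigma_i(L)>0$, e.g.\ when $L$ is a line through the positive orthant and some coordinate of its generator is trimmed). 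Without closing this, the conclusion that indices in $J_\sigma(L)$ are never trimmed does not follow from your inductive chain, and neither does the sharper $\lceil\log_2(1/\sigma_i(L))\rceil$ term in the count. You have correctly located the crux but have not bridged it. For what it is worth, the paper's proof states the invariant ``$\sigma_i(DL)=D_{ii}\sigma_i(L)$ for $i=1,\dots,n$ throughout the algorithm'' as a direct consequence of Lemma~\ref{lemma} without discussing how the certificate for $DL\cap\R^J$ is promoted to one for $DL$ after a trim, so the paper is equally terse at the very step you identify as subtle; any complete write-up needs to supply the argument that makes the multiplicative identity (or at least the inequality $D_{ii}\sigma_i(L)\le 1$ for $i\in J$) survive changes in $J$.
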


\begin{proof}  First, observe that the algorithm must eventually halt since each entry $D_{ii}$ can be rescaled only up to $\lceil \log_2(1/\sigma)\rceil$ times before $i$ is trimmed from $J$. Lemma~\ref{lemma} implies that throughout the algorithm 
\[
\sigma_i(DL) = D_{ii} \cdot \sigma_i(L)
\]
for $i=1,\dots,n$. In particular, $D_{ii} \le 1/\sigma_i(L)$ for $i\in J(L)$ because $\sigma_i(DL) \le 1$.  On the other hand, the trimming operation implies that $\log_2(D_{ii}) \le \lceil \log_2(1/\sigma)\rceil$ for $i=1,\dots,n.$  
Hence when the algorithm halts, the total number of rescaling steps that the algorithm has performed is
\begin{align*}
\sum_{i=1}^n \log_2(D_{ii}) &= 
\sum_{i\in J_\sigma(L)} \log_2(D_{ii}) + 
\sum_{i\not\in J_\sigma(L)} \log_2(D_{ii}) \\
&\le \sum_{i\in J_{\sigma}(L)} \lceil\log_2(1/\sigma_i(L))\rceil + \left(n-\vert J_\sigma(L)\vert\right) \lceil\log_2(1/\sigma)\rceil\\
&\le n\lceil\log_2(1/\sigma)\rceil.
\end{align*}
  If the algorithm terminates with $J= \emptyset$ then the output solution $x = 0 \in L \cap \R^n_+$  vacuously satisfies $x_J > 0$.  Otherwise the algorithm outputs $x = D^{-1}Pu$ for $Pu \in DL\cap \R^J$ and $(Pu)_J > 0$.  Therefore the output solution $x$ satisfies $x = D^{-1}P u \in L \cap \R^n_+$ and $x_J > 0$.
  
We next show that upon termination $J_\sigma(L)\subseteq J \subseteq J(L)$.  Indeed, since $D_{ii} \le 1/\sigma_i(L)$ for $i\in J(L)$, we have $D_{ii} \le 1/\sigma_i(L) \le 1/\sigma$ for $i\in J_\sigma(L)$ and thus the algorithm never trims any indices in $J_\sigma(L)$.  Thus $J_\sigma(L)\subseteq J$ upon termination.
On the other hand, upon termination $J\subseteq J(L)$ since the algorithm outputs some $x\in L\cap \R^n_+$ with $x_J > 0$.
Finally, if $\sigma \leq\sigma(L)$ then $J(L) =J_{\sigma}(L)  \subseteq J \subseteq J(L)$ and so 
$J =J(L)$.
\qed
\end{proof}

\bigskip

Algorithm~\ref{algo.support} has the following natural variant that performs a rescaling along multiple directions:  in Step 5 let $e:=(z/\|(Pz)^+)\|_1-1)$ and use $I+\text{Diag}(e)$ instead of $(I+e_ie_i\transp)$.  We discuss this variant in detail in Section~\ref{sec.variant} where we also show that it has the same convergence properties of Algorithm~\ref{algo.support}.

\bigskip

Algorithm~\ref{algo.support} provides a {\em conceptual} method to find maximum support solution for the feasibility problem  $x \in L \cap \R^n_+$, namely: call Algorithm~\ref{algo.support} with $\sigma \le \sigma(L)$.  Furthermore, Proposition~\ref{prop:support} suggests that the ideal choice is $\sigma = \sigma(L)$.  However, this conceptual method is doomed as $\sigma(L)$ is generally unknown and computing it appears to be as hard as or harder than finding a maximum support solution in $L\cap \R^n_+$.  
We circumvent this challenge via a simple but clever procedure to check if a solution found by Algorithm~\ref{algo.support} has maximum support.  To do so, we rely on the fact that the sets $J(L)$ and  $J(L^\perp)$ partition $\{1,\dots,n\}$.  This is a key duality connection between the problems \eqref{primal-dual}.  More precisely, we apply Algorithm~\ref{algo.support} to both $L\cap \R^n_+$ and $L^\perp\cap\R^n_+$ simultaneously and rely on the observation formalized in Corollary~\ref{corol} 
to determine when the algorithm has found maximum support solutions.

Corollary~\ref{corol} and Algorithm~\ref{algo.max.support} rely on the following notation.
Given a linear subspace $L\subseteq \R^n$ and $\sigma \in (0,1)$, let $(x, J) := \J (L,\sigma)$ denote the output of
Algorithm~\ref{algo.support} when called with input $(L,\sigma)$. The following result readily follows.

\begin{corollary}~\label{corol} Let $L\subseteq \R^n$ be a linear subspace and $\sigma > 0$.  If $(x,J) = \J(L,\sigma)$ and $(\hat x,\hat J) = \J(L^\perp,\sigma)$ satisfy $J \cup \hat J = \{1,\dots,n\}$ then $J = J(L), \hat J = J(L^\perp)$, and $x,\hat x$ are  maximum support points in $L\cap \R^n_+$ and $L^\perp\cap \R^n_+$ respectively.
\end{corollary}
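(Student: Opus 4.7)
The plan is to combine Proposition~\ref{prop:support} applied to each of the two calls with the fact, noted just before the construction of $J_\sigma(L)$ in the excerpt, that the maximum support sets $J(L)$ and $J(L^\perp)$ form a partition of $\{1,\dots,n\}$. Concretely, Proposition~\ref{prop:support} applied to $(L,\sigma)$ yields a point $x \in L\cap \R^n_+$ together with a set $J$ satisfying $J \subseteq J(L)$ and $x_J > 0$; applied to $(L^\perp,\sigma)$ it yields $\hat x \in L^\perp\cap \R^n_+$ and $\hat J \subseteq J(L^\perp)$ with $\hat x_{\hat J} > 0$. This holds for any choice of $\sigma>0$, so the crude inclusions alone do not force equality.

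Next I would feed in the hypothesis $J\cup \hat J = \{1,\dots,n\}$. Since $J(L)\cap J(L^\perp) = \emptyset$ and $J(L) \cup J(L^\perp) = \{1,\dots,n\}$, any index $i \in J(L)$ lies outside $J(L^\perp)$ and therefore outside $\hat J$; the covering hypothesis then forces $i\in J$. Thus $J(L) \subseteq J$, and combined with $J\subseteq J(L)$ from Proposition~\ref{prop:support}, we get $J = J(L)$. The identical argument with the roles of $L$ and $L^\perp$ exchanged yields $\hat J = J(L^\perp)$.

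Finally, to conclude the maximum support claim, I would simply unravel the definition of $J(L)$. By construction $J(L)$ is the support of any point in the relative interior of $L\cap \R^n_+$, and equivalently the maximum possible support of any element of $L\cap \R^n_+$. Since the algorithm returns $x\in L\cap \R^n_+$ with $x_J > 0$ and $J = J(L)$, the support of $x$ is exactly $J(L)$, so $x$ is a maximum support point. The same reasoning applied to $\hat x$ and $L^\perp$ concludes the proof.

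There is no real obstacle here; the corollary is essentially a bookkeeping consequence of Proposition~\ref{prop:support} combined with the partition property of $J(L)$ and $J(L^\perp)$. The only thing one needs to be careful about is that the partial-support output from Algorithm~\ref{algo.support} could in principle be a proper subset of $J(L)$ when $\sigma$ is chosen too large, but the covering hypothesis $J\cup\hat J = \{1,\dots,n\}$ is precisely what rules this out and upgrades the partial-support conclusion to a maximum-support one.
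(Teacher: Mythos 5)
Your proof is correct and follows essentially the same route as the paper: apply Proposition~\ref{prop:support} to both calls to obtain $J\subseteq J(L)$, $\hat J\subseteq J(L^\perp)$ with $x_J>0$, $\hat x_{\hat J}>0$, then use the partition $J(L)\cup J(L^\perp)=\{1,\dots,n\}$, $J(L)\cap J(L^\perp)=\emptyset$ together with the covering hypothesis to force the inclusions to be equalities. The paper merely compresses the set-theoretic bookkeeping you spell out.
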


\begin{proof}
Proposition~\ref{prop:support} implies that $x\in L\cap \R^n_+, \; \hat x\in L^\perp\cap \R^n_+$ with $x_J > 0, \; \hat x_{\hat J} > 0$ and  $J_\sigma(L) \subseteq J \subseteq J(L),\; J_\sigma(L^\perp) \subseteq \hat J \subseteq J(L^\perp)$.  Since the index sets $J(L)$ and $J(L^\perp)$ partition $\{1,\dots,n\}$, the identity $J \cup \hat J= \{1,\dots,n\}$ can only occur when $J=J(L)$ and $\hat J = J(L^\perp).$ 
\qed
\end{proof}

Corollary \ref{corol} naturally suggests the following iterative strategy
to find maximum support solutions to~\eqref{primal-dual}. 
Start with the ad-hoc initial guess $\sigma=1/2$ of $\min\{\sigma(L),\sigma(L^\perp)\}$ and let  $(x,J):=\J(L,\sigma)$ and $(\hat x,\hat J):=\J(L^\perp,\sigma)$.  If $J \cup \hat J= \{1,\dots,n\}$ then Corollary~\ref{corol}
implies that we found maximum support solutions to~\eqref{primal-dual}.  Otherwise, reduce $\sigma$ and repeat.  
Algorithm~\ref{algo.max.support} formally describes the  above strategy.
Theorem \ref{theor} shows that if $\sigma$ is reduced by squaring it each time, then this iterative procedure succeeds after at most 
$\Oh(\log_2\log_2(1/\min\{\sigma(L), \sigma(L^\perp)\}))$ guessing rounds and 
$\Oh(n\log_2(1/\min\{\sigma(L), \sigma(L^\perp)\}))$ rescaling steps.

{\centering\begin{minipage}{\linewidth}
\begin{algorithm}[H]
  \caption{Maximum support     \label{algo.max.support}}
  \begin{algorithmic}[1]
    \State  Take $\sigma:=1/2$ as initial  guess of $\min\{\sigma(L),\sigma(L^\perp)\}$.
       \State Let $(x,J):=\mathcal J(L,\sigma)$ and $(\hat x,\hat J):=\mathcal J(L^\perp,\sigma)$
	\State {\bf If} $J\cup \hat J = \{1,\dots,n\}$ {\bf then} HALT
\State {\bf Else} (scale down $\sigma$) 
\Statex \quad let $\sigma := \sigma^2$
	\Statex \quad Go back to step 2.
	\end{algorithmic}
\end{algorithm}
\end{minipage}}

\begin{theorem}\label{theor} Upon termination Algorithm~\ref{algo.max.support} 
correctly identifies $J=J(L), \; \hat J = J(L^\perp)$ and finds $x\in L\cap \R^n_+, \;  \hat x\in L^\perp\cap \R^n_+$ with $x_J > 0, \hat x_{\hat J}>0$.  The algorithm terminates after  at most 
\begin{equation} \label{eq.main.iter}
 k = \left\lceil\log_2\left(\log_2\left(1/\min\left\{\sigma(L), \sigma(L^\perp)\right\}\right)\right)\right\rceil+1
 \end{equation} 
 main iterations and the total number of rescaling steps performed by Algorithm~\ref{algo.max.support} is bounded above by  
\begin{equation}\label{eq.total.iter}
4n 
\left\lceil\log_2 (1/\min\{\sigma(L), \sigma(L^\perp)\})\right\rceil.
\end{equation}
\end{theorem}
\begin{proof} Proposition~\ref{prop:support}  implies that  $J = J(L)$ and $\hat J = J(L^\perp)$ and thus 
$J\cup\hat J = \{1,\dots,n\}$ when $\sigma \leq \min\left\{\sigma(L), \sigma(L^\perp)\right\}$ or possibly sooner.  Corollary~\ref{corol} hence implies that the solutions $x$ and $\hat x$ returned by Algorithm~\ref{algo.max.support} are maximum support solutions.  Since $\sigma$ is squared at every iteration starting at $1/2$, the number $k$ of main iterations performed by Algorithm~\ref{algo.max.support} is at most the smallest $k$ that satisfies $$\frac{1}{2^{2^{k-1}}} \le 
\min\left\{\sigma(L), \sigma(L^\perp)\right\}.$$ Thus~\eqref{eq.main.iter} follows.

At each main iteration $i=1,\dots,k$ Algorithm~\ref{algo.max.support} calls Algorithm~\ref{algo.support} twice with input pairs $(L,1/2^{2^{i-1}})$ and $(L^\perp,1/2^{2^{i-1}})$. Proposition~\ref{prop:support} implies that each of these calls terminates after at most $ n\lceil\log_2(2^{2^{i-1}})\rceil = n \cdot 2^{i-1}$ rescaling steps.  Hence the total number of rescaling steps performed by Algorithm~\ref{algo.max.support} is bounded above by
\[
\sum_{i=1}^{k}  2n  \cdot 2^{i-1}
= 2n\cdot(2^k-1) \le 4n\lceil\log_2(1/\min\{\sigma(L), \sigma(L^\perp)\})\rceil.
\]
\qed
\end{proof}

It is worth noticing that Algorithm~\ref{algo.max.support} circumvents the main limitation of Algorithm~\ref{algo.support} with barely any overhead: if we knew both $\sigma(L)$ and $\sigma(L^\perp)$ then we could find maximum support solutions via  Algorithm~\ref{algo.support} by letting $(x,J) = \J(L,\sigma(L))$ and $(\hat x, \hat J)= \J(L^\perp,\sigma(L^\perp))$ with a total number of rescaling steps bounded above by
\[
 n \lceil\log_2(1/\sigma(L))\rceil + n \lceil\log_2(1/\sigma(L^\perp))\rceil. 
\]
Therefore, the bound~\eqref{eq.total.iter} is at most four times larger than the {\em ideal} bound achievable if we had full knowledge of both $\sigma(L)$ and $\sigma(L^\perp)$.

\medskip

The dependence on $\min\{\sigma(L),\sigma(L^\perp)\}$ in Theorem~\ref{theor} naturally raises the question: are the magnitudes of $\sigma(L)$ and $\sigma(L^\perp)$ related to each other?  As our next generic construction shows, the answer to this question is no.    Indeed, either $\sigma(L)$ or $\sigma(L^\perp)$ may be arbitrarily larger than the other.  

\begin{proposition}\label{prop.sigmas} Suppose $L=L_1\times L_2\subseteq \R^{n_1+n_2}$ where $L_1\subseteq \R^{n_1}$ and $L_2\subseteq \R^{n_2}$ are such that
\[
L_1\cap \R^{n_1}_{++}\ne\emptyset \text{ and } L_2^\perp\cap \R^{n_2}_{++}\ne\emptyset.
\]
Then $\sigma(L) = \sigma(L_1)$ and $\sigma(L^\perp) = \sigma(L_2^\perp).$  In particular, either $\sigma(L)$ or $\sigma(L^\perp)$ can be arbitrarily larger than the other one. 
\end{proposition}
\begin{proof}
A straightforward verification shows that
\[
L\cap \R^{n_1+n_2}_+ = \left(L_1\cap \R^{n_1}_{+} \right)\times \{0_{n_2}\} \;\text{ and }\; 
L^\perp\cap \R^n_+ = \{0_{n_1}\} \times \left(L_2^\perp \cap \R^{n_2}_{+}\right).
\]
In particular,
\[
\left(L_1\cap \R^{n_1}_{++} \right) \times \{0_{n_2}\} \subseteq L\cap \R^n_+,\; \{0_{n_1}\} \times \left(L_2^\perp \cap \R^{n_2}_{++}\right)  \subseteq L^\perp\cap \R^n_+.\]
It thus follows that  $J(L) = \{1,\dots,n_1\},\; J(L^\perp) = \{n_1+1,\dots,n_1+n_2\}$ and also $\sigma(L) = \sigma(L_1)$ and $\sigma(L^\perp) = \sigma(L_2^\perp)$.

To finish, observe that either $\sigma(L_1)$ or $\sigma(L_2^\perp)$ can be arbitrarily larger than the other since the spaces $L_1\subseteq \R^{n_1}$ and $L_2\subseteq \R^{n_2}$ have no dependence on each other. 
\qed
\end{proof}

Proposition~\ref{prop.sigmas} also suggests why the dependence on $\min\{\sigma(L),\sigma(L^\perp)\}$ appears to be inevitable in Theorem~\ref{theor}.  Suppose $\sigma(L^\perp) < \sigma < \sigma(L)$ which is perfectly possible.  Then Step 2 of Algorithm~\ref{algo.max.support} finds $(x,J) = \mathcal J(L,\sigma)$ and 
$(\hat x,\hat J) = \mathcal J(L^\perp,\sigma)$ with $J = J(L)$ but not necessarily $\hat J = J(L^\perp)$.  However, since there is no a priori relationship between $\sigma(L)$ and $\sigma(L^\perp)$, as long as $J \cup \hat J \ne \{1,\dots,n\}$ the algorithm only knows that either $J \ne J(L)$ or $\hat J \ne J(L^\perp).$  The algorithm thus needs to reduce $\sigma$ and perform Step 2 again until $J \cup \hat J = \{1,\dots,n\}$ holds.

\medskip

We conclude this section with a bound on the total number of arithmetic operations required by Algorithm~\ref{algo.max.support}.  We only give a loose bound since the interesting complexity bounds are already stated in Proposition~\ref{prop:support} and Theorem~\ref{theor}.  The bounds below can be sharpened via a more detailed and lengthly but not necessarily more insightful accounting of arithmetic operations.  In particular, to keep our exposition simple, we state the bounds only in terms of the dimension $n$ of the ambient space and ignore the potentially much lower dimension of $L$ or $L^\perp$.

As we detail in Section~\ref{sec.bp}, the smooth perceptron scheme for the basic procedure is guaranteed to terminate in $\Oh(n^{1.5})$ iterations.  The most costly operation in each iteration of the smooth perceptron is a matrix-vector multiplication involving the projection matrix $P_{DL\vert J}$, that is, $\Oh(n^2)$ arithmetic operations. Thus the number of arithmetic operations required by each call to the basic procedure is bounded above by
\[
\Oh(n^{3.5}).
\]
The number of arithmetic operations required by the rescaling and trimming step (even if we computed the projection matrix from scratch) is dominated by $\Oh(n^{3.5})$.  Therefore the total number of arithmetic operations required by Algorithm~\ref{algo.max.support} is bounded above by
\[
\Oh\left(n^{4.5} \cdot \lceil\log_2(1/\min\{\sigma(L),\sigma(L^\perp)\})\rceil\right).
\]

\section{Full support solutions redux.}
\label{sec.full.support}
We next revisit the projection and rescaling algorithm in~\cite{PenaS16} for the full support problem. We also compare its condition-based performance to that of the methods in~\cite{BellFV09,DaduVZ17,DunaV06,PenaS13}. 
Algorithm~\ref{algo.full.support} describes the projection and rescaling algorithm in~\cite{PenaS16}.  
Observe that Algorithm~\ref{algo.full.support} is the same as Algorithm~\ref{algo.support} without trimming.  Indeed, when $L\cap \R^n_{++} \ne \emptyset$ or equivalently $J(L) = \{1,\ldots, n\}$,  Algorithm~\ref{algo.full.support} does exactly the same as Algorithm~\ref{algo.support}
provided $\sigma \le \sigma(L)$.

{\centering\begin{minipage}{\linewidth}
\begin{algorithm}[H]
  \caption{Full support} \label{algo.full.support}
  \begin{algorithmic}[1]
    \State  ({\bf Initialization}) 
    \Statex Let $D := I$
       \State Let $P := P_{DL}$
	\State ({\bf Basic Procedure})
	\Statex \quad	Find  either $u\in \Delta_{n-1}$ such that $Pu > 0$ or 
	\Statex \quad $z \in \Delta_{n-1}$ such that $\|(Pz)^+\|_1 \le \frac{1}{2} \|z\|_\infty$
	\State    {\bf If} $Pu > 0$ {\bf then} HALT and output  $x = D^{-1}Pu$

\State {\bf Else (Rescale $L$)}
\Statex \quad let $i:= \argmax_j z_j$ and $D:=(I+e_ie_i\transp)D$

	\Statex \quad Go back to step 2
	\end{algorithmic}
\end{algorithm}
\end{minipage}}

\bigskip

Theorem 1 in~\cite{PenaS16} shows that when $L\cap\R^n_{++} \neq \emptyset$ Algorithm~\ref{algo.full.support} finds a full support solution in $L\cap\R^n_{++}$ in at most $\log_2(1/\delta_\infty(L))$ rescaling iterations where $\delta_\infty(L)$ is the following measure of the most interior solution to $L\cap\R^n_{++}$:
 $$\delta_\infty(L) = 
\max\left\{\prod_{j=1}^n x_j: x\in L\cap \R^n_{++}, \|x\|_\infty \le 1\right\}.$$ 
More precisely, the statement in~\cite[Theorem 1]{PenaS16} is actually stated in terms of a variant $\delta(L)$ of $\delta_\infty(L)$ that uses the normalization $\|x\|_2^2 \le n  $ instead of $\|x\|_\infty \le 1$.  However, as sketched in~\cite[page 93]{PenaS16}, the above statement in terms of $\delta_\infty(L)$ follows via a straightforward modification of~\cite[Theorem 1]{PenaS16}.

It is easy to see that $\prod_{j=1}^n \sigma_j(L) \ge \delta_\infty(L).$
Proposition~\ref{prop:improvedBound.full.support} below shows that the iteration bound $\log_2(1/\delta_\infty(L))$ can be sharpened to 
\[
\sum_{j=1}^n \log_2(1/\sigma_j(L)) = 
\log_2 \left(\prod_{j=1}^n 1/\sigma_j(L)\right)
\]
modulo some rounding.

\begin{proposition}\label{prop:improvedBound.full.support} If $L\cap \R^n_{++} \ne \emptyset$ then Algorithm~\ref{algo.full.support} finds $x\in L \cap \R^n_{++}$ in at most 
\begin{equation}\label{eq.bound.full}
\sum_{j=1}^n \lceil\log_2(1/\sigma_j(L))\rceil \le n\lceil\log_2(1/\sigma(L))\rceil
\end{equation}
rescaling steps.
\end{proposition}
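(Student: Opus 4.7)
My plan is to adapt the argument used in the proof of Proposition~\ref{prop:support}, stripped down to its essentials because Algorithm~\ref{algo.full.support} never trims. The full-support hypothesis $L\cap\R^n_{++}\ne \emptyset$ amounts to $J(L) = \{1,\dots,n\}$, so $\sigma_j(L) > 0$ for every $j$ and the bookkeeping of the maximum-support set becomes trivial. In fact, Algorithm~\ref{algo.full.support} is what one obtains from Algorithm~\ref{algo.support} after removing the threshold parameter $\sigma$, which is exactly what permits a sharper per-coordinate bound.

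The core step will be to track the rescaling matrix $D$ via repeated application of Lemma~\ref{lemma}. Since each rescaling step left-multiplies $D$ by $(I+e_ie_i\transp)$ for a single index $i$, the matrix $D$ stays diagonal throughout, and if $k_j$ counts the number of rescalings that have used index $j$, then $D_{jj} = 2^{k_j}$. A straightforward induction on the number of rescalings via Lemma~\ref{lemma} gives $\sigma_j(DL) = D_{jj}\cdot \sigma_j(L)$ at every iteration. The defining bound $\sigma_j(DL)\le 1$ then forces $D_{jj}\le 1/\sigma_j(L)$, and hence $k_j \le \lceil\log_2(1/\sigma_j(L))\rceil$ for each $j$.

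Summing over $j$ gives a total rescaling count of at most $\sum_j \lceil\log_2(1/\sigma_j(L))\rceil$, and the upper bound $n\lceil\log_2(1/\sigma(L))\rceil$ claimed in~\eqref{eq.bound.full} follows immediately because $\sigma(L) = \min_j \sigma_j(L)$. Since the rescaling phase can fire only finitely many times under this bound, the basic procedure must eventually return some $u\in\Delta_{n-1}$ with $Pu > 0$, after which the algorithm halts with $x = D^{-1}Pu$. To conclude I would note that $Pu \in DL\cap \R^n_{++}$ and $D$ is positive diagonal, hence $x \in L\cap \R^n_{++}$, as required.

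There is essentially no obstacle here once Lemma~\ref{lemma} is in place: the only delicate point is to keep the factor-of-two accounting consistent and to observe that the universal bound $\sigma_j(DL) \le 1$ is the structural reason no single coordinate can ever be rescaled more than $\lceil\log_2(1/\sigma_j(L))\rceil$ times. This is exactly the same mechanism that drove Proposition~\ref{prop:support}, only here it is applied coordinate-by-coordinate without being polluted by the cruder threshold $\lceil\log_2(1/\sigma)\rceil$ that trimming forces on indices outside $J_\sigma(L)$.
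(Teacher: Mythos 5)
Your argument is correct and uses essentially the same mechanism as the paper: the paper proves this by noting that Algorithm~\ref{algo.full.support} coincides with Algorithm~\ref{algo.support} run with any $\sigma < \sigma(L)$ (so trimming never fires) and then reading off the first term of the bound in~\eqref{eq.bound.partial} from Proposition~\ref{prop:support}, whereas you unroll that same accounting directly via Lemma~\ref{lemma} and the invariant $\sigma_j(DL)=D_{jj}\,\sigma_j(L)\le 1$. The two routes differ only in explicitness, not in substance.
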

\begin{proof}
This readily follows from Proposition~\ref{prop:support} since Algorithm~\ref{algo.full.support} is identical to  
Algorithm~\ref{algo.support} applied to $\sigma  \le  \sigma(L)$.  Indeed, for this choice of $\sigma$ we have $J_\sigma(L) = J(L) = \{1,\dots,n\}$ and thus Algorithm~\ref{algo.support} does not trim any indices and the first expression in~\eqref{eq.bound.partial} yields precisely the first expression in~\eqref{eq.bound.full}.
\qed
\end{proof}

It is evident that Algorithm~\ref{algo.full.support} terminates only when $L\cap\R^n_{++}\ne\emptyset$.  Proceeding exactly as in~\cite[Algorithm 2]{PenaS16}, we can apply Algorithm~\ref{algo.full.support} in parallel so that it terminates with either a solution in $L\cap\R^n_{++}$ or in $L^\perp\cap\R^n_{++}$ as long as one of them is nonempty in a number of rescaling iterations either bounded above by~\eqref{eq.bound.full} when $L\cap\R^n_{++}\ne \emptyset$ or bounded above by
\[
\sum_{j=1}^n \lceil\log_2(1/\sigma_j(L^\perp))\rceil \le n\lceil\log_2(1/\sigma(L^\perp))\rceil
\]
when $L^\perp\cap\R^n_{++}\ne \emptyset$.

\medskip

It is natural to ask how our projection and rescaling algorithm and its condition-based analysis compares with other rescaling algorithms and their condition-based analyses such as those described in~\cite{BellFV09,DaduVZ17,DunaV06,PenaS13}.  The condition-based analyses in all of these previous articles applies only to the full support case and are stated in terms of a different condition measure $\vert \rho(A)\vert$ where $A\in \R^{m\times n}$ is such that 
$L = \{x\in \R^n: Ax = 0\}$ or equivalently 
$L^\perp = \{A\transp y: y\in \R^m\}$. 
The dependence on $\rho(A)$ of all of the algorithms in~\cite{BellFV09,DaduVZ17,DunaV06,PenaS13} is due to the type of rescaling used: in contrast to the diagonal rescaling step $L \mapsto DL$ that lies at the core of our algorithms,  all of the methods in~\cite{BellFV09,DaduVZ17,DunaV06,PenaS13} rely on rescaling steps of the form $A \mapsto MA$ for some non-singular $M \in \R^{m\times m}$.  A rescaling step of the form $A\mapsto MA$ can be interpreted as an attempt to transform the set $\{y\in \R^m : A\transp y \ge 0\} \subseteq \R^m$ to one with a more favorable shape.  By contrast, a rescaling step of the form $L\mapsto DL$, as the one underlying our approach, can be interpreted as an attempt to transform the intersection $L\cap \R^n_+\subseteq \R^n$ to one with a more favorable shape.

Since the condition measures $\min\{\sigma(L),\sigma(L^\perp)\}$ and $\vert \rho(A) \vert$  are associated to objects in different spaces, it is not obvious that they are comparable to each other.   Indeed, without any preconditioning assumptions on $A$ either $\min\{\sigma(L),\sigma(L^\perp)\}$ or $\vert \rho(A) \vert$ can be arbitrarily larger than the other one.   Nonetheless,
Proposition~\ref{prop.compare}  and Proposition~\ref{prop.compare.reverse} below show that $\min\{\sigma(L),\sigma(L^\perp)\}$ and $\vert\rho(A)\vert$ can be bounded in terms of each other provided $A$ is suitably preconditioned. We should note that the bounds in Proposition~\ref{prop.compare}  and Proposition~\ref{prop.compare.reverse} are in the same spirit and similar to some results in~\cite{EpelF00,PenaRS14}.

Proposition~\ref{prop.compare} shows that  after a simple preconditioning step that leaves $\vert \rho(A)\vert$ unchanged the condition measure $\vert\rho(A)\vert$ is more conservative, and possibly far more so, than $\min\{\sigma(L),\sigma(L^\perp)\}$.  In particular, any algorithm whose condition-based analysis is stated in terms of $\min\{\sigma(L),\sigma(L^\perp)\}$ is automatically stronger, and possibly vastly so, than any other algorithm whose condition-based analysis is stated in terms of $\vert\rho(A)\vert$ as far as the dependence on the condition measure goes.  Said differently, the projection and rescaling algorithm for~\eqref{primal-dual}  described in this paper as well as its predecessor~\cite[Algorithm 2]{PenaS16} applied to the full-support case have stronger condition-based convergence properties than those in~\cite{BellFV09,DaduVZ17,DunaV06,PenaS13}.

\medskip

The condition measure $\rho(A)$ is defined as follows.  Suppose  $A \in \R^{m\times n}$ is a full row-rank matrix whose columns  are all non-zero.  That is,
\[
A = \matr{a_1 & \cdots & a_n} \in \R^{m\times n} \; \text{ with } \; a_i\ne 0 \text{ for } i=1,\dots,n.
\] 
The condition measure $\rho(A)$ is defined as follows
\[
\rho(A):=\max_{\|y\|_2 = 1} \min_{i=1,\dots,n} \frac{\ip{a_i}{y}}{\|a_i\|_2}.
\]
The condition measure $\rho(A)$ has an interesting history in optimization as discussed in~\cite{BurgC13,CheuC01,DunaV06,EpelF00,Goff80,SoheP12}.  Among other features, it has the following nice geometric interpretation.
When $\rho(A)>0$, the quantity $\rho(A)$ can be interpreted as a measure of {\em thickness} of the cone $K := \{y \in \R^m: A\transp y \ge 0\}\subseteq \R^m$.  Indeed, in this case  $\rho(A)$ is the radius of the largest ball centered at a point of Euclidean norm one and contained in $K$.  Furthermore, $\rho(A) > 0$ if and only if $0 \not \in \{Ax: x\in \Delta_{n-1}\}$ and $\rho(A)<0$ if and only if $0\in\text{int}(\{Ax: x\in \Delta_{n-1}\})$, where $\Delta_{n-1}:=\{x\in\R^n_+:\|x\|_1 = 1\}.$  We note  that the set $\{Ax: x\in \Delta_{n-1}\}$   is precisely the convex hull of the columns of $A$.   
Regardless of the sign of $\rho(A)$, its absolute value $\vert \rho(A) \vert$ is precisely the distance from $0$ to the boundary of $\{Ax: x\in \Delta_{n-1}\}$ provided all columns of $A$ have Euclidean norm equal to one, that is, $\|a_i\|_2=1,\; i=1\dots,n$.  

\medskip

Let $L = \{x\in \R^n: Ax = 0\}$ or equivalently
$L^\perp = \{A\transp y: y\in \R^m\}$.  It is easy to see that $\rho(A) > 0 \Leftrightarrow L^\perp \cap \R^n_{++} \ne \emptyset$ and $\rho(A) < 0 \Leftrightarrow L \cap \R^n_{++} \ne \emptyset$.  
Proposition~\ref{prop.compare} below refines these equivalences in terms of the condition measure  $\min\{\sigma(L), \sigma(L^\perp)\}.$  Note that our construction of $\sigma(\cdot)$ implies that $\sigma(L) = \min\{\sigma(L),\sigma(L^\perp)\}$ when $L\cap\R^n_{++} \ne \emptyset$ and $\sigma(L^\perp) = \min\{\sigma(L),\sigma(L^\perp)\}$ when $L^\perp\cap\R^n_{++} \ne \emptyset$.
Note also that $\rho(A)$ is invariant under positive scaling of the columns of $A$.  In particular, if the columns of $A\in \R^{m\times n}$ are all non-zero then $\rho(A) = \rho(\hat A)$ where $\hat A\in \R^{m\times n}$ is obtained by normalizing (positive scaling) the columns of $A$ so that all the columns of $\hat A$ have Euclidean norm equal to one.

\begin{proposition}\label{prop.compare}
Suppose  $A \in \R^{m\times n}$ is a full row-rank matrix whose columns have Euclidean norm equal to one. Let $L = \{x\in \R^n: Ax = 0\}$ or equivalently
$L^\perp = \{A\transp y: y\in \R^m\}$.  

\begin{description}
\item[(a)] If $\rho(A) > 0$ then $\rho(A) \le \sigma(L^\perp)$.  Furthermore, $\sigma(L^\perp)$ can be arbitrarily larger than $\rho(A)$.
\item[(b)] 
If $\rho(A) < 0$ then $\vert \rho(A)\vert \le \sigma(L)$.  Furthermore, $\sigma(L)$ can be arbitrarily larger than $\vert\rho(A)\vert$.
\end{description}
\end{proposition}
\begin{proof}
\begin{description}
\item[(a)] Let $\bar y\in \R^m$ be such that 
$\|\bar y\|_2 =1$ and 
$\rho(A) = \min_{i=1,\dots,n}\ip{a_i}{\bar y} > 0.$
  Then $\bar x := A\transp \bar y \in L^\perp$ and for each $i=1,\dots,n$ we have 
$\bar x_i = \ip{a_i}{\bar y}\ge \rho(A) > 0$ and $\bar x_i \le \|a_i\|_2\cdot \|\bar y\|_2 \le 1.$  In other words, $\bar x \in L\cap \R^{n}_{++}, \|\bar x\|_\infty \le 1,$ and $\bar x_i \ge \rho(A)$ for each $i=1,\dots,n$.  Thus $\sigma(L^\perp) \ge \rho(A).$ The following example shows that  $\sigma(L^\perp)$ can be arbitrarily larger than $\rho(A).$ Let 
\[
A = \frac{1}{\sqrt{1+\epsilon^2}}\matr{1 & 1 & -1 &-1\\ \epsilon&\epsilon&\epsilon&\epsilon}
\]
where $0<\epsilon< 1$.  It is easy to see that
$\rho(A) = \epsilon/\sqrt{1+\epsilon^2}$ and $\sigma(L^\perp) = 1$. 

\item[(b)] In this case we have $\min_{i=1,\dots,n}\ip{a_i}{y} \le \rho(A) < 0$ for all $y\in\R^m$ with $\|y\|_2 = 1$.  Thus for all $v\in \R^m$ with $\|v\|_2 \le \vert \rho(A)\vert$ we have
\[
\max_{\|y\|_2 =1}\min_{x\in\Delta_{n-1}} \ip{Ax-v}{y} \le 0.
\]
It thus follows, via a standard separation argument, that $v\in \{Ax: x\in\Delta_{n-1}\}$ for all $v\in \R^m$ with $\|v\|_2 \le \vert \rho(A)\vert$.  In particular, for each $i=1,\dots,n$ there exists $\bar x\in \Delta_{n-1}$ such that 
$A\bar x = -\vert \rho(A)\vert a_i$.  Thus $\hat x:= (\bar x + \vert \rho(A)\vert e_i)/\|\bar x + \vert \rho(A)\vert e_i\|_\infty$ satisfies \[\hat x \in L\cap \R^n_+, \; \|\hat x\|_\infty =1,\; \hat x_i \ge \vert \rho(A)\vert.
\]
Since this holds for each $i=1,\dots,n$, it follows that $\sigma(L) \ge \vert \rho(A) \vert.$  The following example shows that $\sigma(L)$ can be arbitrarily larger than $\vert \rho(A)\vert$.  Let \[A = \frac{1}{\sqrt{1+\epsilon^2}}\matr{1 & 1 & -1 & -1\\ \epsilon & -\epsilon &\epsilon & -\epsilon}\]  where $0<\epsilon< 1$.  It is easy to see that
$\vert \rho(A)\vert = \epsilon/\sqrt{1+\epsilon^2}$ and $\sigma(L) = 1$. 
\end{description}
\qed
\end{proof}

Our next result shows an analogue of Proposition~\ref{prop.compare} that relates the quantities $\rho(A)$ and $\min\{\sigma(L), \sigma(L^\perp)\}$ in the reverse direction provided the matrix $A$ is preconditioned so that its {\em rows} are orthonormal.  Note that the spaces $L = \{x\in \R^n: Ax = 0\}$ and $L^\perp = \{A\transp y: y\in \R^m\}$ are invariant if we transform $A$ via left multiplication by a non-singular matrix.  In particular, $A$ can be transformed to $\hat A:= MA$ for some non-singular $M\in \R^{m\times m}$ (e.g., via the Gram-Schmidt procedure) without changing $\sigma(L)$ and $\sigma(L^\perp)$  so that the rows of $\hat A$ are orthonormal.

\begin{proposition}\label{prop.compare.reverse}
Suppose  $A \in \R^{m\times n}$ has non-zero columns and its rows are orthonormal, that is, each row has Euclidean norm equal to one and any two different rows are orthogonal to each other. Let $L = \{x\in \R^n: Ax = 0\}$ or equivalently
$L^\perp = \{A\transp y: y\in \R^m\}$.  

\begin{description}
\item[(a)] If $L^\perp\cap \R^n_{++}\ne \emptyset$ then $\rho(A) \ge \sigma(L^\perp)/(n\sqrt{n})$.  Furthermore,
$\rho(A)$ can be arbitrarily larger than  $\sigma(L^\perp)/(n\sqrt{n})$.
\item[(b)] 
If $ L\cap \R^n_{++}\ne \emptyset$ then $\vert \rho(A)\vert \ge \sigma(L)/n^2$.  Furthermore, $\vert\rho(A)\vert$ can be arbitrarily larger than $\sigma(L)/n^2$.
\end{description}
\end{proposition}
\begin{proof}
\begin{description}
\item[(a)] Let $\bar x\in L^\perp\cap \R^n_{++}$ be such that 
$\|\bar x\|_\infty \le n$ and $\min_{i=1,\dots,n} \bar x_i \ge \sigma(L^\perp)$. 
Such $\bar x$ can be constructed by taking $\bar x = \sum_{i=1}^n x^i$ where each $x^i$ satisfies $x^i \in L^\perp \cap \R^n_+, \; \|x^i\|_\infty\le 1, \; x^i_i = \sigma_i(L^\perp).$ 

Since $L^\perp = \{A\transp y: y\in \R^m\}$ and the rows of $A$ are orthonormal, it follows that $\bar x = A\transp \bar y$ for some $\bar y\in \R^m$ with $\|\bar y\|_2 = \|\bar x\|_2 \le \sqrt{n} \|\bar x\|_\infty \le n\sqrt{n}$ and $\|a_i\|_2 \le 1$ for $i=1,\dots,n.$  Hence
\[
\rho(A) \ge \min_{i=1,\dots,n}\frac{\ip{a_i}{\bar y}}{\|\bar y\|_2\cdot \|a_i\|_2} \ge \min_{i=1,\dots,n}\frac{\bar x_i}{n\sqrt{n}\cdot \|a_i\|_2} \ge \frac{\sigma(L^\perp)}{n\sqrt{n}}.
\]
The following example shows that  $\rho(A)$ can be arbitrarily larger than  $\sigma(L^\perp)/(n\sqrt{n}).$  Let 
\[
A = \matr{\epsilon/\sqrt{2+\epsilon^2} & 1/\sqrt{2+\epsilon^2} &1/\sqrt{2+\epsilon^2}\\ 0 & -1/\sqrt{2} & 1/\sqrt{2}}
\]
where $0<\epsilon< 1$.  It is easy to see that
$\rho(A) = \sqrt{2/(4+\epsilon^2)}>\sqrt{2/5}$ and $\sigma(L^\perp) = \epsilon$. 

\item[(b)] Let $\bar x\in L\cap \R^n_{++}$ be such that 
$\|\bar x\|_\infty \le n$ and $\min_{i=1,\dots,n} \bar x_i \ge \sigma(L)$.  
Since $\|a_i\|_2\le 1$ for $i=1,\dots,n$ it suffices to show that for all $y\in \R^m$ with $\|y\|_2 = 1$ 
\[
\min_{i=1,\dots,n} \ip{a_i}{y} \le -\frac{\sigma(L)}{n^2}.
\]
To that end, let $y\in \R^m$ with $\|y\|_2=1$ be fixed and put $u:=A\transp y$. Thus $\|u\|_2 = 1$ and $u \in L^\perp$.  Let $I:=\{i\in\{1,\dots,n\}: u_i > 0\}$ and $J := \{j\in\{1,\dots,n\}: u_j < 0\}$.  Since $\bar x\in L$, $u\in L^\perp$, and $\|u\|_2 = 1$ it follows that $1= \|u\|_2^2 = \|u_I\|_2^2 + \|u_J\|_2^2$ and
\[
0 = \ip{u}{\bar x} = \ip{u_I}{\bar x_I} + \ip{u_J}{\bar x_J}.
\]
In particular, both $I,J$ must be nonempty because $\bar u \ne 0$ and $\bar x > 0$.  Since $u_J < 0,\, u_I> 0,$ and $\min_{i=1,\dots,n} \bar x_i \ge \sigma(L) > 0,$  it follows that
\[
\|\bar x_J\|_1 \cdot \min_{j\in J} u_j \le \ip{u_J}{\bar x_J} = -\ip{u_I}{\bar x_I} \le -\|u_I\|_1\cdot \sigma(L) \le - \|u_I\|_2 \cdot \sigma(L).
\]
Hence 
\[
\min_{j\in J} u_j \le 
- \frac{\sigma(L)\cdot \|u_I\|_2}{\|\bar x_J\|_1} \le - \frac{\sigma(L)\cdot \|u_I\|_2}{n\vert J\vert}\le - \frac{\sigma(L)\cdot \|u_I\|_2}{n(n-1)}.
\]
where the last two steps follow from $\|\bar x\|_\infty \le n$ and $J \ne \{1,\dots,n\}$.

On the other hand, since $u_J< 0$ and $1= \|u\|_2^2 = \|u_I\|_2^2 + \|u_J\|_2^2$ we also have 
\[
\min_{j\in J} u_j \le - \frac{\|u_J\|_2}{\sqrt{\vert J \vert}}\le - \frac{\|u_J\|_2}{\sqrt{n-1}} = - \frac{\sqrt{1-\|u_I\|_2^2}}{\sqrt{n-1}}.
\]
Therefore
\begin{align*}
\min_{i=1,\dots,n} \ip{a_i}{y} =  \min_{j\in J} u_j 
&\le \frac{1}{n(n-1)} \cdot \min_{\|u_I\| \in (0,1)} \left\{ - \sigma(L) \cdot \|u_I\|_2, - \sqrt{n^2(n-1)(1-\|u_I\|_2^2)}\right\} \\
&= -\frac{\sigma(L)}{\sqrt{n-1}\cdot \sqrt{n^2(n-1)+\sigma(L)^2}} \\
&\le -\frac{\sigma(L)}{n^2}.
\end{align*}
The following example shows that  $\vert\rho(A)\vert$ can be arbitrarily larger than  $\sigma(L)/n^2.$  Let 
\[
A = \matr{-\epsilon/\sqrt{2+\epsilon^2} & 1/\sqrt{2+\epsilon^2} &1/\sqrt{2+\epsilon^2}\\ 0 & -1/\sqrt{2} & 1/\sqrt{2}}
\]
where $0<\epsilon< 1$.  It is easy to see that
$\vert \rho(A)\vert = \sqrt{\frac{1-\sqrt{2/(4+\epsilon^2)}}{2}} > \frac{\sqrt{2-\sqrt{2}}}{2}$ and $\sigma(L) = \epsilon/2$. 

\end{description}
\end{proof}

\section{Basic procedure.}
\label{sec.bp}

This section describes an implementation of the basic procedure, which is a key component of Algorithm~\ref{algo.support}.  
To simplify notation, we describe the basic procedure for the case when $J = \{1,\dots,n\}$.  The extension to any $J \subseteq \{1,\dots,n\}$ is completely straightforward.  Suppose $P\in \R^{n\times n}$ is the projection onto a linear subspace $L\subseteq \R^n$.  The goal of the basic procedure is to  find 
either $u\in \Delta_{n-1}:=\{x\in \R^n: x\ge 0, \|x\|_1 =1 \}$ 
such that $Pu > 0$, or
$z\in \Delta_{n-1}:=\{x\in \R^n: x\ge 0, \|x\|_1 =1 \}$ such that  $\|(Pz)^+\|_1 \le \frac{1}{2}\|z\|_\infty$. To that end, consider the problem
\begin{equation}\label{eq.subproblem}
\min_{z\in \Delta_{n-1}} \frac{1}{2} \|Pz\|_2^2 \Leftrightarrow \min_{z\in \Delta_{n-1}}\max_{u\in \Delta_{n-1}} \left\{-\frac{1}{2}\|Pu\|_2^2 + \ip{Pu}{Pz}\right\}
\end{equation}
and its dual
\[
\max_{u\in \Delta_{n-1}} \left\{-\frac{1}{2}\|Pu\|_2^2 + \min_{z\in \Delta_{n-1}}\ip{Pu}{Pz}\right\} \Leftrightarrow \max_{u\in \Delta_{n-1}} \left\{-\frac{1}{2}\|Pu\|_2^2 + \min_{z\in \Delta_{n-1}}\ip{Pu}{z}\right\}. 
\]

The articles~\cite{PenaS16,PenaS19} describe several first-order  schemes for~\eqref{eq.subproblem} that achieve the goal of the basic procedure.  All of these algorithms generate sequences $z_k, u_k\in \Delta_{n-1}$ satisfying
\begin{equation}\label{eq.dual.gap}
\frac{1}{2} \|Pz_k\|_2^2+ \frac{1}{2}\|Pu_k\|_2^2 - 
\min_{z\in \Delta_{n-1}}\ip{Pu_k}{z}
 \le \mu_k
\end{equation}
for $\mu_k\rightarrow 0$.  The above property of first-order schemes is not explicitly stated in~\cite{PenaS16,PenaS19} but it can be easily inferred as shown in the recent paper~\cite{GutmP20}.

From~\eqref{eq.dual.gap} it follows that as long as $Pu_k \not > 0$ we must have $\frac{1}{2} \|Pz_k\|_2^2 \le \mu_k$.  The latter in turn implies that
\[
\|(Pz_k)^+\|_1 \le \sqrt{n} \|Pz_k\|_2 \le \sqrt{2n \mu_k} \le 
n\sqrt{2n \mu_k} \|z\|_\infty
\]
and thus the basic procedure terminates when $\mu_k \le \frac{1}{8n^3}.$  Algorithm~\ref{algo.bp} describes the {\em smooth perceptron} basic procedure which generates iterates $u_k,z_k\in\Delta_{n-1}$ satisfying~\eqref{eq.dual.gap} with $\mu_k = \frac{8}{(k+1)^2}$ and thus is guaranteed to terminate in at most $k = \Oh(n^{1.5})$ iterations.  This is both theoretically and computationally the fastest of the first-order schemes for the basic procedure proposed in~\cite{PenaS16,PenaS19}. Algorithm~\ref{algo.bp} relies on the mapping $u_{\mu}:\R^n \rightarrow \Delta_{n-1}$ defined as follows. Let $\bar u \in \Delta_{n-1}$ be fixed and $\mu > 0$. Let
\[
u_\mu(v) := \argmin_{u\in\Delta_{n-1}}\left\{\ip{u}{v} + \frac{\mu}{2}\|u-\bar u\|_2^2\right\}.
\]

{\centering\begin{minipage}{\linewidth}
\begin{algorithm}[H]
  \caption{Smooth Perceptron Scheme
    \label{algo.bp}}
  \begin{algorithmic}[1]
  \State let $u_0 := \bar u$; $\mu_0 = 2$; $z_0:=u_{\mu_0}(Pu_0);$ and $k:=0$ 
 \While {$Pu_k \ngtr 0$ and $\|(Pz_k)^+\|_1 > \epsilon\|z_k\|_\infty$}
\Statex  \quad $\theta_k:=\frac{2}{k+3}$ 
\Statex   \quad $ u_{k+1} :=(1-\theta_k)(u_k + \theta_k z_k)  + \theta_k^2 u_{\mu_k}(Pu_k)$
\Statex  \quad $\mu_{k+1} := (1-\theta_k)\mu_k$
\Statex \quad  $z_{k+1} := (1-\theta_k)z_k + \theta_k u_{\mu_{k+1}}(Pu_{k+1})$
\Statex  \quad $k := k+1$
\EndWhile

\end{algorithmic}
\end{algorithm}
\end{minipage}}

\section{A variant of Algorithm~\ref{algo.support}.}\label{sec.variant}

Algorithm~\ref{algo.support.variant} describes a
 variant of Algorithm~\ref{algo.support}
 that performs rescaling along multiple directions.  The difference between the two algorithms is the following.    
In Step 5 let $e:=(z/\|(Pz)^+)\|_1-1)^+$ and use $I+\text{Diag}(e)$ instead of $(I+e_ie_i\transp)$.  To cover the special case $(Pz)^+=0$ 
we use the following convention when $(Pz)^+ = 0$: let the $i$-th component of $e=(z/\|(Pz)^+)\|_1-1)^+$ be 
 \[
\text{$i$-th component of $e$} =  \left\{\begin{array}{rl} 0 & \text{ if } z_i = 0 \\
 +\infty & \text{ if } z_i > 0.\end{array} \right.
 \]
With this convention,   Step 5 simply trims $J$ by removing the indices corresponding to positive entries in $z$ when $(Pz)^+ = 0$.

{\centering\begin{minipage}{\linewidth}
\begin{algorithm}[H]
  \caption{Partial support with rescaling along multiple directions} \label{algo.support.variant}
  \begin{algorithmic}[1]
    \State  ({\bf Initialization}) 
    \Statex Let $D := I$, $ J := \{1,\dots,n\}$, and $\sigma\in(0,1)$ be an educated  guess of $\sigma(L)$.
       \State Let $P := P_{DL\vert J}$
	\State ({\bf Basic Procedure})
	\Statex 	\quad Find  either 
	$u\in \Delta(J)$ such that $(Pu)_J > 0$ or 
	\Statex \quad $z \in \Delta(J)$ such that $\|(Pz)^+\|_1 \le \frac{1}{2} \|z\|_\infty$.
	\State   {\bf If} $(Pu)_J > 0$ {\bf then} HALT and output  $x = D^{-1}Pu$ and $J$

\State {\bf Else (Rescale $L$ \& Trim $J$) }
\Statex \quad let $e:=\left(z/\|(Pz)^+\|_1-1\right)^+$ and $D:=(I+\text{Diag}(e))D$
\Statex \quad let $I:=\{i:D_{ii} > 1/\sigma\}$ and $J := J \setminus I$
\Statex \quad {\bf if } $J = \emptyset$ {\bf then} HALT and output  $x = 0$ and $J=\emptyset$
	\Statex \quad Go back to step 2
	\end{algorithmic}
\end{algorithm}
\end{minipage}}

\bigskip

Algorithm~\ref{algo.support.variant}, which is similar to some variants of the projection and rescaling algorithm discussed in~\cite{LourKMT16,PenaS19,Roos18}, has  convergence properties that are at least as strong as those of Algorithm~\ref{algo.support}.  The latter is an immediate consequence of the following variant of Lemma~\ref{lemma}.  We omit the proof of Lemma~\ref{lemma.2} as it is nearly identical to that of Lemma~\ref{lemma}.

\begin{lemma}\label{lemma.2} Let $L\subseteq \R^n$ be a linear subspace and $P:\R^n\rightarrow L$ be the orthogonal projection onto $L$.  Suppose $z\in\R^n_+ \setminus\{0\}$ is such that $\|(Pz)^+\|_1\le \frac{1}{2} \|z\|_\infty = \frac{1}{2}z_i$ for some $i \in \{1,\dots,n\}$.  Let $e:= (z/\|(Pz)^+\|_1-1)^+.$  Then  for $D := I+\text{Diag}(e)$ the rescaled subspace $DL\subseteq \R^n$ satisfies
\[
\sigma_i(DL) \ge 2\sigma_i(L) \text{ and } \sigma_j(DL) \ge \sigma_j(L) \text{ for } j\ne i.
\]
\end{lemma}

\section{Conclusion.}\label{sec.conclusion}

We provide a natural extension of the projection and rescaling algorithm~\cite{PenaS16, PenaS19} to find maximum support solutions to the pair of 
 feasibility problems
\[
\text{find}  \; x\in L\cap\R^n_{+} \;\;\;\; \text{ and } \; \;\;\;\;
\text{find} \; \hat x\in L^\perp\cap\R^n_{+},
\]
where $L$ is a linear subspace in $\R^n$ and $L^\perp$ is its orthogonal complement. 

Our approach hinges on three key ideas.  First, we propose a projection and rescaling algorithm that finds a point in a set of the form $L\cap \R^n_+$  that may not necessarily have  maximum support (Algorithm~\ref{algo.support}).  Second, by relying on Algorithm~\ref{algo.support} and on a key duality connection between $L\cap \R^n_+$ and $L^\perp\cap\R^n_{+}$, we propose a second algorithm that finds maximum support solutions to~\eqref{primal-dual} (Algorithm~\ref{algo.max.support}).  Third, the analyses of our algorithms rely on a novel condition measure $\min\{\sigma(L),\sigma(L^\perp)\}$ that can be seen as a refinement of condition measures previously proposed and used in~\cite{PenaS16,Ye94} (see Proposition~\ref{prop:support} and Theorem~\ref{theor}).

Our results complements the extensive and encouraging computational results reported in~\cite{PenaS19}. More precisely, we give a rigorous proof of correctness for  a minor variant of~\cite[Algorithm 1]{PenaS19}.




%
%
%






\bibliographystyle{plain}

\end{document}